\newtheorem{theorem}{Theorem}[section]
\newtheorem{corollary}{Corollary}[section]
\newtheorem{lemma}{Lemma}[section]
\begin{document}
\begin{center}
\title{The Asymptotic Behaviors of $\log_{r}W(r, k)$ and $\log_{k}W(r, k)$, when $W(r, k)$ is a van der Waerden Number}
\author{\textbf{Robert J. Betts}}
\maketitle
\emph{The Open University\\Postgraduate Department of Mathematics and Statistics~\footnote{During 2012--2013, when the Author was working on an earlier draft.}\\ (Main Campus) Walton Hall, Milton Keynes, MK7 6AA, UK\\
Robert\_Betts@alum.umb.edu}
\end{center}
\begin{abstract}
We derive the asymptotic behaviors of $\log_{r}W(r, k)$ and $\log_{k}W(r, k)$, when $W(r, k)$ is a van der Waerden Number. We use the approach to consider the subsets on the real line in which $W(2, 7)$ might lie.\footnote{\textbf{Mathematics Subject Classification} (2010): Primary 11B25; Secondary 68R01.},\footnote{\textbf{ACM Classification}: G.2.0},\footnote{\textbf{Keywords}: Arithmetic progression, integer colorings, monochromatic, van der Waerden number.}
\end{abstract}
\section{Introduction}
Let \(r > 1\) be any integer and $N$ any other integer much greater than $r$. Then for some positive integer exponent $n$ there exist always integers
\begin{equation}
b_{n}, b_{n - 1}, \ldots, b_{0} \in [0, r - 1],
\end{equation}
with \(1 \leq b_{n} \leq r - 1\), such that the following two statements are true always~\cite{Abramowitz},~\cite{Rosen}, for $N$:
\begin{enumerate}
\item \(N = b_{n}r^{n} + b_{n - 1}r^{n - 1}  + \cdots  +  b_{0}\).\\
\item \(r^{n} \leq N < r^{n + 1}\).
\end{enumerate}  
For example let \(N = 261, r = 10\). Then
$$
261 = 2\cdot 10^{2} + 6\cdot 10 + 1 \in [10^{2}, 10^{3}).
$$
If we let \(N = 261, r = 8\) then we get 
$$
261 = 4\cdot 8^{2} + 5 \in [8^{2}, 8^{3}).
$$
\indent Now substitute $W(r, k)$ in place of $N$, that is, now we are letting \(W(r, k) = N\) be true, where $W(r, k)$ is a van der Waerden number~\cite{van der Waerden}, and where $r$ is the number of integer colorings~\cite{Graham1},~\cite{Graham2},~\cite{Graham and Rothschild},~\cite{Graham and Spencer},~\cite{Landman and Robertson},~\cite{Landman and Culver},~\cite{Khinchin}, such that the interval $[1, W(r, k)]$ on $\mathbb{R}$ contains an arithmetic progression of $k$ terms. We then obtain through substitution by $W(r, k)$ for $N$, 
\begin{eqnarray}
N&=              &b_{n}r^{n} + b_{n - 1}r^{n - 1}  + \cdots  + b_{0} \in [r^{n}, r^{n + 1})\\
 &\Longrightarrow&W(r, k) = b_{n}r^{n} + b_{n - 1}r^{n - 1}  + \cdots  + b_{0} \in [r^{n}, r^{n + 1}).
\end{eqnarray}
For instance, for the particular case~\cite{Kouril},~\cite{Rabung and Lotts}, \(W(2, 6) = 1132\),
\begin{equation}
W(2, 6) = 1132 = 1 \cdot 2^{10} + 1 \cdot 2^{6} + 1 \cdot 2^{5} + 1 \cdot 2^{3} + 1 \cdot 2^{2},
\end{equation}
where~\cite{Betts1},~\cite{Betts2}, 
\begin{equation}
2^{10} \leq 1132 < 2^{11}.
\end{equation}
For van der Waerden numbers $W(2, 3)$, $W(2, 4)$, $W(2, 5)$, $W(2, 6)$, $\ldots$, the corresponding values for $n$ are
$$
n = 3, 5, 7, 10, \ldots,
$$
For van der Waerden numbers $W(3, 3)$, $W(3, 4)$, $\ldots$, they are
$$
n = 3, 5, \ldots,
$$
and for $W(4, 3)$, \(n = 3\).\\
\indent In two Preprints~\cite{Betts1},~\cite{Betts2}, we showed previously, among other things, the following:
\begin{enumerate}
\item \(W(r, k) \in [r^{n}, r^{n + 1})\), where, substituting $W(r, k)$ for $N$ when \(N = W(r, k)\), the exponent $n$ is that positive integer exponent for which $r^{n}$ divides $W(r, k)$, $r^{n + 1}$ does not divide $W(r, k)$ and such that $W(r, k)$ has the finite power series expansion~\cite{Abramowitz},~\cite{Rosen}, 
\begin{equation}
W(r, k) = b_{n}r^{n} + b_{n - 1}r^{n - 1} + \cdots + b_{0},
\end{equation}
where
\begin{equation}
b_{n} \in [1, r - 1], \: \: b_{n - 1}, \ldots, b_{0} \in [0, r - 1].
\end{equation}
\\
\item $W(r, k)$ also has the finite power series expansion, for some positive integer exponent $m$,
\begin{equation}
W(r, k) = c_{m}k^{m} + c_{m - 1}k^{m - 1} + \cdots + c_{0} \in [k^{m}, k^{m + 1}),
\end{equation}
where
\begin{equation}
c_{m} \in [1, k - 1], \: \: c_{m - 1}, \ldots, c_{0} \in [0, k - 1].
\end{equation}
\\
\item \(W(r, k) < r^{n + 1} \leq r^{k^{2}}\) \(\Leftrightarrow (k \geq \sqrt{n + 1}\) \(\Leftrightarrow n \leq k^{2} - 1)\)~\cite{Betts1},~\cite{Betts2}.\\
\item \(W(r, k) \approx r^{n}\) is true with a very small relative error of $|1 - O(1)|$, when $W(r, k)$, $r^{n}$, are large~\cite{Betts1}.\\
\item Since \((W(r, k) \in [r^{n}, r^{n + 1})) \wedge (W(r, k) \in [k^{m}, k^{m + 1}))\) \(\Longrightarrow W(r, k) \in [r^{n}, r^{n + 1}) \cap [k^{m}, k^{m + 1})\), we derive that
$$
W(r, k) \in [r^{n}, r^{n + 1}) \cap [k^{m}, k^{m + 1}) \not = \emptyset.
$$
\end{enumerate}
When $r$ is the base or radix we have \(n = \lfloor \log_{r}W(r, k) \rfloor\). When $k$ is the base or radix we have \(m = \lfloor \log_{k}W(r, k) \rfloor\). As an example of Statement (2),
$$
W(2, 6) = 1132 = 5\cdot 6^{3} + 1\cdot 6^{2} + 2\cdot 6^{1} + 4 \in [6^{3}, 6^{4}),
$$
where we already have expanded this van der Waerden number $W(2, 6)$ into powers of $r$ in Eqtn. (4).\\
\indent Let us discuss for a moment Statement (3), which we have treated elsewhere~\cite{Betts1},~\cite{Betts2}.\\
\indent Since we have the case \(W(3, 3) = 3^{3} \Longrightarrow 3^{3} \leq W(3, 3) < 3^{4}\), we cannot assume that \(r^{n} < W(r, k)\) is true always.  Hence we must conclude \(r^{n} \leq W(r, k) < r^{n + 1}\). Note also that \(W(r, k) = r^{n + 1}\) is impossible, since we would have the impossible result \(W(r, k) = r^{n + 1}\) \(\Longrightarrow n = \lfloor \log_{r} W(r, k) \rfloor\) \(= \lfloor n + 1 \rfloor = n + 1\). Now reconsider Eqtns. (2)--(3), then consider the two exponents
$$
n, n + 1.
$$ 
We have justified already Statement (3) elsewhere~\cite{Betts1},~\cite{Betts2}. However here we present another argument to justify Statement (3) that
$$
W(r, k) < r^{n + 1} \leq r^{k^{2}} \Leftrightarrow (k \geq \sqrt{n + 1}  \Leftrightarrow n \leq k^{2} - 1).
$$
Assume \(k^{2} \leq n < n + 1\) is true always for each and every $W(r, k)$. Then automatically
$$
k^{2} \leq n < n + 1 \Longrightarrow r^{k^{2}} \leq r^{n} \leq W(r, k) < r^{n + 1} 
$$
is true always. Yet clearly that is false, since \(r^{k^{2}} \leq r^{n} \leq W(r, k) < r^{n + 1}\) does not hold for any of the known van der Waerden numbers $W(2, 3)$, $W(2, 4)$, $W(2, 5)$, $W(2, 6)$, $W(3, 3)$, $W(3, 4)$ and $W(4, 3)$, as one can see from Table 2. Note further that if we assume \(n < k^{2} < n + 1 \Longrightarrow r^{n} < r^{k^{2}} < r^{n + 1}\) then this also is impossible, since there are neither any integers nor any integer perfect squares between the two integers $n$ and $n + 1$ where \((n, n + 1) \subset \mathbb{R}\) is open in $\mathbb{R}$. This then is another contradiction derived from our second assumption. \\
\indent On the other hand if
$$
k \geq \sqrt{n + 1} \Leftrightarrow n \leq k^{2} - 1,
$$
holds in Statement (3), a necessary and sufficient condition does exist for which \(W(r, k) < r^{n + 1} \leq r^{k^{2}}\) is true for van der Waerden number $W(r, k)$ and for positive integer $k$, namely
$$
k \geq \sqrt{n + 1} \Leftrightarrow n \leq k^{2} - 1.
$$
Now this necessary and sufficient condition \emph{does happen to hold for all the known van der Waerden numbers} $W(2, 3)$, $W(2, 4)$, $W(2, 5)$, $W(2, 6)$, $W(3, 3)$, $W(3, 4)$ and $W(4, 3)$! One should take a good look at the entries for $k$, $\sqrt{n + 1}$, $n$, $r^{n}$, $W(r, k)$, $r^{n + 1}$ and $r^{k^{2}}$, in columns two, three, four and columns seven, eight, nine and ten in Table 2, Section 2, to confirm that a necessary and sufficient condition for which \(W(r, k) < r^{n + 1} \leq r^{k^{2}}\) will be true for any given $k$ and for any given van der Waerden number $W(r, k)$ whether known or unknown, is
$$
k \geq \sqrt{n + 1} \Leftrightarrow n \leq k^{2} - 1.
$$
We provide a proof to Statement 3, this Section, in Appendix A.\\
\indent The expansion of $W(r, k)$ into powers of $r$ in (1) does not have $k$ appearing anywhere implicitly in the expansion. Also in (2) the finite expansion of $W(r, k)$ into powers of $k$ does not depend upon $r$ appearing anywhere implicitly in the expansion. Let \(N = W(r, k) \in \mathbb{N}\), \(N > k\). Then expressing $W(r, k)$ as in (1) and (2) allows us not only to express some van der Waerden numbers $W(r, N)$ recursively as discrete function values of other smaller van der Waerden numbers $W(r, k)$, such as
$$
W(r, N) = W(r, W(r, k)),
$$ 
but also to expand such a van der Waerden number like $W(r, W(r, k))$ into, for example, powers of $W(r, k)$ for some positive integer exponent $M$, such as
\begin{eqnarray}
W(r, W(r, k))&=  &C_{M}W(r, k)^{M} + C_{M - 1}W(r, k)^{M - 1} + \cdots + C_{0}\nonumber\\
             &\in&[W(r, k)^{M}, W(r, k)^{M + 1}),\nonumber
\end{eqnarray}
where $W(r, k)$ everywhere in this expansion of $W(r, W(r, k))$ into powers of $W(r, k)$ can be replaced with its expansion into powers of $k$ in (2), and where \(C_{M} \in [1, W(r, k) - 1], C_{M - 1}, \ldots, C_{0} \in [0, W(r, k) - 1]\). For example
\begin{eqnarray}
W(2, W(2, 3))&=  &C_{M}W(2, 3)^{M} + C_{M - 1}W(2, 3)^{M - 1} + \cdots + C_{0}\nonumber\\
             &\in&[W(2, 3)^{M}, W(2, 3)^{M + 1}),\nonumber
\end{eqnarray}
where everywhere $W(2, 3)$ appears within this expansion,
\begin{eqnarray}
W(2, 3)&=              &9 = 3^{2}\nonumber\\
       &\Longrightarrow&W(2, W(2, 3)) = W(2, 3^{2})\nonumber\\
       &=              &C_{M}(3^{2})^{M} + C_{M - 1}(3^{2})^{M - 1} + \cdots + C_{0}(3^{2})^{0},\nonumber
\end{eqnarray}
\(\forall \: C_{M} \in [1, 8]\), \(\forall C_{M - 1}, \ldots, C_{0} \in [0, 8]\), \(r = 2, k = 3\). The same thing can be done if for integer \(R > r\), \(R = W(r, k) \Longrightarrow W(R, k) = W(W(r, k), k)\). That is, one can expand the van der Waerden number $W(W(r, k), k)$ into powers of $W(r, k)$ just as one can expand the van der Waerden number $W(r, W(r, k))$ into powers of $W(r, k)$.\\
\indent By the compound proposition in (3) we see that a necessary and sufficient condition for which $W(r, k)$ is bounded above by $r^{k^{2}}$, is for \(k \geq \sqrt{n + 1} \Leftrightarrow n \leq k^{2} - 1\) to hold~\cite{Betts1},\cite{Betts2}. The last result in (4) leads us to investigate the behavior of $\log_{r}W(r, k)$ (See Table 1 and Table 2). So in the next Section we demonstrate the asymptotic behaviors of both $\log_{r}W(r, k)$ and $\log_{k}W(r, k)$, when $r$ and $k$, respectively, increase each in turn without bound. However before we leave this Section we provide a Lemma to justify more formally Eqtn. (3) and which we can use in the Proof to Theorem 3.1. 
\begin{lemma}
Let \(N = b_{n}r^{n} + b_{n - 1}r^{n - 1} + \cdots + b_{0} > r\) and \(W(r, k) = N\), where
$$
N = b_{n}r^{n} + b_{n - 1}r^{n - 1} + \cdots + b_{0},
$$
is the finite expansion of the integer $N$ into powers of $r$ and where the integers \(b_{n}, b_{n - 1}, \ldots, b_{0}\) are as in Eqtn. (7), so that $(b_{n}b_{n - 1}\cdots b_{0})_{r}$ would be the base $r$ representation of $N$ (See~\cite{Rosen},~\cite{Abramowitz}). Then for any van der Waerden number $W(r, k)$ such that, after an $r$--coloring among the integers in $[1, W(r, k)]$, the interval $[1, W(r, k)]$ has an AP of $k$ terms, 
$$
W(r, k) \in [r^{n}, r^{n + 1}) \subset \mathbb{R}.
$$
\end{lemma}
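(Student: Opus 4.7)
The plan is to prove the two inclusions $W(r,k) \ge r^{n}$ and $W(r,k) < r^{n+1}$ separately, using only the digit constraints $b_{n} \in [1, r-1]$ and $b_{n-1}, \ldots, b_{0} \in [0, r-1]$ together with the hypothesis $W(r,k) = N$. The argument is essentially the standard fact that the base-$r$ representation of a positive integer with leading digit at position $n$ lies in $[r^{n}, r^{n+1})$; the only subtlety here is to phrase it in the lemma's notation and to confirm that the hypothesis $N > r$ forces the exponent $n$ to be the one appearing in the statement.

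For the lower bound, I would note that every $b_{i}$ is non-negative and $b_{n} \ge 1$, so
\begin{equation*}
W(r,k) = N = b_{n}r^{n} + \sum_{i=0}^{n-1} b_{i}r^{i} \;\ge\; b_{n}r^{n} \;\ge\; r^{n}.
\end{equation*}
For the upper bound, I would use $b_{n} \le r-1$ together with $b_{i} \le r-1$ for $i < n$ to bound the whole sum by a geometric series:
\begin{equation*}
N \;\le\; (r-1)\sum_{i=0}^{n} r^{i} \;=\; (r-1)\cdot \frac{r^{n+1}-1}{r-1} \;=\; r^{n+1} - 1 \;<\; r^{n+1}.
\end{equation*}
Combining the two inequalities and substituting $W(r,k) = N$ gives $W(r,k) \in [r^{n}, r^{n+1})$, which is the desired conclusion.

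The only real obstacle, if one can call it that, is making sure the exponent $n$ is unambiguously defined, since the statement refers to "the" finite expansion. I would invoke the uniqueness of base-$r$ representations (already cited in the paper via \cite{Rosen},\cite{Abramowitz}) to note that the condition $b_{n} \ge 1$ pins down $n$ as the largest index with a non-zero digit, so $n = \lfloor \log_{r} N \rfloor = \lfloor \log_{r} W(r,k) \rfloor$. The hypothesis $N > r$ ensures $n \ge 1$, which is harmless here but rules out the degenerate single-digit case. Since $r^{n+1}$ itself cannot equal $W(r,k)$ (as the paper has already observed, this would force $n = n+1$), the half-open interval $[r^{n}, r^{n+1})$ is the sharpest such containment, completing the proof.
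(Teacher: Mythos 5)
Your proof is correct. It takes the same basic route as the paper's proof --- both rest on the standard fact that a base-$r$ representation with leading digit at position $n$ lies in $[r^{n}, r^{n+1})$ --- but you actually derive that fact, whereas the paper's proof consists only of the substitution chain $W(r,k) = N = b_{n}r^{n} + \cdots + b_{0} \in [r^{n}, r^{n+1})$ and defers the containment itself to the cited references \cite{Rosen}, \cite{Abramowitz}. Your two inequalities (the lower bound from $b_{n} \ge 1$ and nonnegativity of the remaining digits, and the upper bound via $(r-1)\sum_{i=0}^{n} r^{i} = r^{n+1} - 1 < r^{n+1}$) are exactly the content being outsourced there, so your version is the more self-contained of the two. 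Your closing remarks on the uniqueness of $n$ and on $n = \lfloor \log_{r} W(r,k) \rfloor$ are consistent with what the paper asserts separately in Section 1 and are a reasonable addition, though not strictly needed for the containment itself.
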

\begin{proof}
\begin{eqnarray}
W(r, k)&=              &N \: \: \mbox{(From what is given)},\nonumber\\
N      &=              &b_{n}r^{n} + b_{n - 1}r^{n - 1} + \cdots + b_{0} \: \: \mbox{(From what is given)},\nonumber\\
       &               &b_{n}r^{n} + b_{n - 1}r^{n - 1} + \cdots + b_{0} \in [r^{n}, r^{n + 1}) \subset \mathbb{R} \: \mbox{(See~\cite{Rosen},~\cite{Abramowitz})},\nonumber\\
       &\Longrightarrow&N \in [r^{n}, r^{n + 1}) \subset \mathbb{R} \: \: \mbox{(By substitution with $N$ for the expansion)}\nonumber\\
       &\therefore     &W(r, k) = b_{n}r^{n} + b_{n - 1}r^{n - 1} + \cdots + b_{0} \in [r^{n}, r^{n + 1}) \subset \mathbb{R}.\nonumber\\
       &               &\: \: \mbox{(By substitution with $W(r, k)$ for $N$)}\nonumber
\end{eqnarray}
\end{proof}
\section{Asymptotic Behaviors of $\log_{r}W(r, k)$, $\log_{k}W(r, k)$}
We establish the asymptotic behavior of $\log_{r}W(r, k)$ with Theorem 2.1 and the asymptotic behavior of $\log_{k}W(r, k)$ with Theorem 2.2.
\begin{theorem}
Let 
\begin{equation}
W(r, k) = b_{n}r^{n} + b_{n - 1}r^{n - 1} + \cdots + b_{0}.
\end{equation}
Then for large $r$, $r^{n}$ and $W(r, k)$, \(\log_{r}W(r, k) = n + O(1)\).
\end{theorem}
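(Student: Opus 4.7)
The claim is essentially a direct consequence of Lemma 1.1, so my plan is to package that observation into a clean logarithmic estimate.

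First, I would invoke Lemma 1.1 to get the two--sided bound
\[
r^{n} \;\leq\; W(r,k) \;=\; b_{n}r^{n} + b_{n-1}r^{n-1} + \cdots + b_{0} \;<\; r^{n+1},
\]
which is the content we already have about the placement of $W(r,k)$ on the real line.

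Next, I would factor $r^{n}$ out of the expansion and take $\log_{r}$ of both sides, so that
\[
\log_{r} W(r,k) \;=\; n \;+\; \log_{r}\!\left(b_{n} + \frac{b_{n-1}}{r} + \cdots + \frac{b_{0}}{r^{n}}\right).
\]
The remaining step is to bound the bracketed quantity. Since $b_{n}\in[1,r-1]$ and each $b_{i}\in[0,r-1]$, the inner sum lies in $[1,r)$: the lower bound comes from $b_{n}\geq 1$, and the upper bound from the geometric estimate $\sum_{i=0}^{n}(r-1)/r^{n-i} < r$. Therefore the logarithm of that sum lies in $[0,1)$, and we conclude $\log_{r} W(r,k) - n \in [0,1)$, i.e.\ $\log_{r} W(r,k) = n + O(1)$.

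I do not anticipate a real technical obstacle — every piece is already in place from the preceding development. The only point needing some care is interpretive: the error $O(1)$ is uniformly bounded (by $1$) but does not shrink as $r$, $r^{n}$ and $W(r,k)$ grow, and I would make this explicit so the reader understands that the ``large'' hypothesis is only used to make the asymptotic language meaningful, not to produce a tighter error term than the crude digit-by-digit bound already gives.
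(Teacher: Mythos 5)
Your proof is correct and follows essentially the same route as the paper's: factor the leading power of $r$ out of the digit expansion, take $\log_{r}$, and bound the leftover logarithmic term by a constant. The only cosmetic difference is that you keep $b_{n}$ inside the bracket (so the whole error lies in $[0,1)$ as a single term), whereas the paper factors out $b_{n}r^{n}$ and splits the error into $\log_{r}b_{n}$ plus $\log_{r}\left(1 + \frac{b_{n-1}}{b_{n}r} + \cdots\right)$, each separately $O(1)$.
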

\begin{proof}
By substitution with the right hand side of Equation (10) for $W(r, k)$,
\begin{eqnarray}
\lim_{r \rightarrow \infty}\log_{r}W(r, k)&=&\lim_{r \rightarrow \infty}\log_{r}(b_{n}r^{n} + b_{n - 1}r^{n - 1} + \cdots + b_{0})\\
                                          &=&\lim_{r \rightarrow \infty}\log_{r}b_{n}r^{n}\left(1 + \frac{b_{n - 1}}{b_{n}r} + \frac{b_{n - 2}}{b_{n}r^{2}} + \cdots + \frac{b_{0}}{b_{n}r^{n}}\right)\nonumber\\
                                          &=&\lim_{r \rightarrow \infty}\log_{r}b_{n}\\
                                          &+&\lim_{r \rightarrow \infty} n\log_{r}r + \lim_{r \rightarrow \infty} \log_{r}\left(1 + \frac{b_{n - 1}}{b_{n}r} + \frac{b_{n - 2}}{b_{n}r^{2}} + \cdots + \frac{b_{0}}{b_{n}r^{n}}\right).\nonumber\\
                                          & &                                                                                                                                                                                                                            
\end{eqnarray}
Look at the first limit in Eqtns. (12)--(13). This is $O(1)$, since \(b_{n} \leq r - 1 \Longrightarrow \lim_{r \rightarrow \infty}\log_{r}b_{n}\) \(\leq \lim_{r \rightarrow \infty}\log_{r}r = 1\). Now look at the second limit in Eqtns. (12)--(13). This limit simply is $n$, since \(\lim_{r \rightarrow \infty}\log_{r}r = 1 \Longrightarrow\) \(\lim_{r \rightarrow \infty}n\log_{r}r = n\lim_{r \rightarrow \infty}\log_{r}r = n\). Finally we find that 
\begin{equation}
\lim_{r \rightarrow \infty} \log_{r}\left(1 + \frac{b_{n - 1}}{b_{n}r} + \frac{b_{n - 2}}{b_{n}r^{2}} + \cdots + \frac{b_{0}}{b_{n}r^{n}}\right),
\end{equation}
is equal to or smaller than the limit
\begin{equation}
\lim_{r \rightarrow \infty} \log_{r}1 + \lim_{r \rightarrow \infty}\left(\frac{b_{n - 1}}{b_{n}r} + \frac{b_{n - 2}}{b_{n}r^{2}} + \cdots + \frac{b_{0}}{b_{n}r^{n}}\right) = 0 + O(1),
\end{equation}
since obviously as $r$, $r^{n}$ increase without bound,
\begin{equation}
\left(\frac{b_{n - 1}}{b_{n}r} + \frac{b_{n - 2}}{b_{n}r^{2}} + \cdots + \frac{b_{0}}{b_{n}r^{n}}\right) \leq 1.
\end{equation}
Define
\begin{eqnarray}
\varepsilon_{1}(r, n)&=& \log_{r}b_{n}, \: \: 1 \leq b_{n} \leq r - 1,\nonumber\\
\varepsilon_{2}(r, n)&=& \log_{r}\left(1 + \frac{b_{n - 1}}{b_{n}r} + \frac{b_{n - 2}}{b_{n}r^{2}} + \cdots + \frac{b_{0}}{b_{n}r^{n}}\right),\nonumber\\
                  & &b_{n - 1}, b_{n - 2}, \ldots, b_{0} \in [0, r - 1].\nonumber
\end{eqnarray}
So using these three results in Eqtns. (11)--(13) we derive as $r$, $r^{n}$, $W(r, k)$, increase without bound,
\begin{eqnarray}
\log_{r}W(r, k)&=&\varepsilon_{1}(r, n) + n + \varepsilon_{2}(r, n)\\ 
               &=&n + O(1),
\end{eqnarray}
where $\varepsilon_{1}(r, n)$, $\varepsilon_{2}(r, n)$, are two, positive discrete function values such that \(\varepsilon_{1}(r, n) = O(1)\) and \(\varepsilon_{2}(r, n) = O(1)\). We get the final result using the fact~\cite{Erdelyi}, that \(O(1) + O(1) = O(1)\).
\end{proof}
Since \(\log_{r} W(r, k) = \delta(r, k) \Longrightarrow W(r, k) = r^{\delta(r, k)}\), the significance of Theorem 2.1 is that as $r^{n}$, $W(r, k)$ grow larger, \(|\delta(r, k) - n| = O(1)\). One should not be too astonished by this. Since on the interval $[n, n + 1)$ on $\mathbb{R}$ we have both \(|n + 1 - n| = 1\) and \(\delta(r, k) \in [n, n + 1)\) for each van der Waerden number \(W(r, k) = r^{\delta(r, k)}\), we have also in fact that \(|\delta(r, k) - n| \leq 1\) for each van der Waerden number $W(r, k)$ (Compare the values for $\delta(r, k)$ and $n$ in Table 1 and Table 2, this paper). \\
\indent The result to find the asymptotic behavior of $\log_{k}W(r, k)$ is straightforward and very similar to the approach for the proof to Theorem 2.1.
\begin{theorem}
Let 
\begin{equation}
W(r, k) = c_{m}k^{m} + c_{m - 1}k^{m - 1} + \cdots + c_{0} \in [k^{m}, k^{m + 1}) \in \mathbb{R},
\end{equation}
be the expansion of $W(r, k)$ into powers of $k$, where
\begin{equation}
c_{m} \in [1, k - 1], \: \: c_{m}, c_{m - 1}, \ldots, c_{0} \in [0, k - 1].
\end{equation}
Then for large $k$, $k^{m}$ and $W(r, k)$ and as these increase without bound, \(\log_{k}W(r, k) = m + O(1)\).
\end{theorem}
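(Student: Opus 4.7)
The plan is to imitate the proof of Theorem 2.1 almost line for line, trading $r$ for $k$ and the digits $b_i$ for the digits $c_i$. First I would factor the leading term $c_m k^m$ out of the expansion to write
\[
W(r,k) = c_m k^m \left(1 + \frac{c_{m-1}}{c_m k} + \frac{c_{m-2}}{c_m k^2} + \cdots + \frac{c_0}{c_m k^m}\right),
\]
and then apply $\log_k$ to both sides, using the product and power rules to split the result into three summands: $\log_k c_m$, the quantity $m \log_k k$, and $\log_k$ of the bracketed correction factor.

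Next I would dispose of each summand as $k$, $k^m$, and $W(r,k)$ grow without bound. The middle term is exactly $m$, since $\log_k k = 1$. For the first term, the digit bound $1 \leq c_m \leq k-1$ gives $0 \leq \log_k c_m \leq \log_k(k-1) < 1$, so $\log_k c_m = O(1)$. For the third term, I observe that for each $j \geq 1$ and $k \geq 2$ one has $c_{m-j}/(c_m k^j) \leq (k-1)/k^j \leq 1$, so the bracketed sum stays in a bounded neighborhood of $1$ as $k$ and $k^m$ increase; taking $\log_k$ of this bounded expression yields $O(1)$ (indeed, it tends to $0$).

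Defining auxiliary error functions $\eta_1(k,m) = \log_k c_m$ and $\eta_2(k,m) = \log_k\bigl(1 + c_{m-1}/(c_m k) + \cdots + c_0/(c_m k^m)\bigr)$, I would then assemble the pieces as
\[
\log_k W(r,k) = \eta_1(k,m) + m + \eta_2(k,m),
\]
and conclude $\log_k W(r,k) = m + O(1)$ using the identity $O(1) + O(1) = O(1)$.

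The main obstacle, such as it is, is really just bookkeeping: I need to confirm that the bounds on the $c_i$ translate into exactly the same $O(1)$ estimates that worked for the $b_i$ in Theorem 2.1. Since $c_m \in [1,k-1]$ and $c_{m-1}, \ldots, c_0 \in [0,k-1]$ mirror precisely the digit bounds of the base-$r$ expansion used there, no new ideas are required; the argument is essentially a mechanical transcription of the proof of Theorem 2.1.
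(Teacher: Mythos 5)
Your proposal matches the paper's own proof of this theorem essentially line for line: the same factoring of $c_m k^m$, the same three-way split of $\log_k W(r,k)$ into $\log_k c_m$, $m\log_k k$, and the logarithm of the bracketed correction factor, the same auxiliary functions $\eta_1(k,m)$ and $\eta_2(k,m)$, and the same conclusion via $O(1)+O(1)=O(1)$. No substantive differences to report.
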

\begin{proof}
By substitution with the right hand side of
\begin{equation}
W(r, k) = c_{m}k^{m} + c_{m - 1}k^{m - 1} + \cdots + c_{0}
\end{equation}
for $W(r, k)$,
\begin{eqnarray}
\lim_{k \rightarrow \infty}\log_{k}W(r, k)&=&\lim_{k \rightarrow \infty}\log_{k}(c_{m}k^{m} + c_{m - 1}k^{m - 1} + \cdots + c_{0})\\
                                          &=&\lim_{k \rightarrow \infty}\log_{k}c_{m}k^{m}\left(1 + \frac{c_{m - 1}}{c_{m}k} + \frac{c_{m - 2}}{c_{m}k^{2}} + \cdots + \frac{c_{0}}{c_{m}k^{m}}\right)\nonumber\\
                                          &=&\lim_{k \rightarrow \infty}\log_{k}c_{m}\\
                                          &+&\lim_{k \rightarrow \infty} m\log_{k}k\\
                                          &+&\lim_{k \rightarrow \infty} \log_{k}\left(1 + \frac{c_{m - 1}}{c_{m}k} + \frac{c_{m - 2}}{c_{m}k^{2}} + \cdots + \frac{c_{0}}{c_{m}k^{m}}\right).
\end{eqnarray}
Again as in the proof to Theorem 2.1, the first limit in Eqtns. (23) is $O(1)$, since \(c_{m} \leq k - 1 \Longrightarrow \lim_{k \rightarrow \infty}\log_{k}c_{m}\) \(\leq \lim_{k \rightarrow \infty}\log_{k}k = 1\). Now look at the second limit in Eqtns. (24). Again as in the proof to the previous Theorem, this limit simply is $m$, since \(\lim_{k \rightarrow \infty}m\log_{k}k = m\lim_{k \rightarrow \infty}\log_{k}k = m\). Third in Eqtn. (25),
\begin{equation}
\lim_{k \rightarrow \infty} \log_{k}\left(1 + \frac{c_{m - 1}}{c_{m}k} + \frac{c_{m - 2}}{c_{m}k^{2}} + \cdots + \frac{c_{0}}{c_{m}k^{m}}\right),
\end{equation}
is equal to or smaller than the limit
\begin{equation}
\lim_{k \rightarrow \infty} \log_{k}1 + \lim_{k \rightarrow \infty}\left(\frac{c_{m - 1}}{c_{m}k} + \frac{c_{m - 2}}{c_{m}k^{2}} + \cdots + \frac{c_{0}}{c_{m}k^{m}}\right) = 0 + O(1),
\end{equation}
since as $k$, $k^{m}$ increase without bound,
\begin{equation}
\left(\frac{c_{m - 1}}{c_{m}k} + \frac{c_{m - 2}}{c_{m}k^{2}} + \cdots + \frac{c_{0}}{c_{m}k^{m}}\right) \leq 1.
\end{equation}
Define
\begin{eqnarray}
\eta_{1}(k, m)&=&\log_{k}c_{m}, \: \: 1 \leq c_{m} \leq k - 1,\nonumber\\
\eta_{2}(k, m)&=&\log_{k}\left(1 + \frac{c_{m - 1}}{c_{m}k} + \frac{c_{m - 2}}{c_{m}k^{2}} + \cdots + \frac{c_{0}}{c_{m}k^{m}}\right),\nonumber\\
              & &c_{m - 1}, c_{m - 2}, \ldots, c_{0} \in [0, k - 1].\nonumber
\end{eqnarray}
Therefore in Eqtns. (21)--(25) and similar to how we derived the result for Theorem 2.1,
\begin{eqnarray}
\log_{k}W(r, k)&=&\eta_{1}(k, m) + m + \eta_{2}(k, m)\\
               &=&m + O(1),
\end{eqnarray}
for two positive discrete function values $\eta_{1}(k, m)$, $\eta_{2}(k, m)$, such that \(\eta_{1}(k, m) = O(1), \eta_{2}(k, m) = O(1)\).
\end{proof}
Again as with Theorem 2.1, since \(\log_{k} W(r, k) = \delta(r, k) \Longrightarrow W(r, k) = k^{\delta(r, k)} \geq k^{m}\), the significance of Theorem 2.2 is that as $k^{m}$, $W(r, k)$ grow larger, \(|\delta(r, k) - m| = O(1)\).
\begin{center}
\begin{tabular}{|l      |c      |c          |c                                       |c                         |c                                       |c                                        |r|}
\hline
               $r$   &  $k$  &  $n$   &     \(\delta(r, k) = \log_{r} W(r, k)\)   &   \(W(r, k) = N\)     &      \(N = r^{\delta(r, k)}\)        &         \(\delta(r, k) \in [n, n + 1)\)  \\   
\hline
                $2$  &  $3$  &  $3$   &      $3.17010\ldots$                      &   \(W(2, 3) = 9\)     &       \(9 = 2^{3.17010\ldots}\)      &         \(3.17010\ldots \in [3, 4)\)          \\
 
                $2$  &  $4$  &  $5$   &      $5.12963\ldots$                      &   \(W(2, 4) = 35\)    &       \(35 = 2^{5.12963\ldots}\)     &         \(5.12963\ldots \in [5, 6)\)          \\
                
                $2$  &  $5$  &  $7$   &      $7.47623\ldots$                      &   \(W(2, 5) = 178\)   &       \(178 = 2^{7.47623\ldots}\)    &         \(7.47623\ldots \in [7, 8)\)          \\ 

                $2$  &  $6$  &  $10$  &      $10.14534\ldots$                     &   \(W(2, 6) = 1132\)  &       \(1132 = 2^{10.14534\ldots}\)  &         \(10.14534\ldots \in [10, 11)\)      \\
                 
                $3$  &  $3$  &  $3$   &      $3.00002\ldots$                      &   \(W(3, 3) = 27\)    &       \(27 = 3^{3.00002\ldots}\)     &         \(3.00002\ldots \in [3, 4)\)        \\
                
                $3$  &  $4$  &  $5$   &      $5.17037\ldots$                      &   \(W(3, 4) = 293\)   &       \(293 = 3^{5.17037\ldots}\)    &         \(5.17037\ldots \in [5, 6)\)     \\

                $4$  &  $3$  &  $3$   &      $3.12417\ldots$                      &   \(W(4, 3) = 76\)    &       \(76 = 4^{3.12417\ldots}\)     &         \(3.12417\ldots \in [3, 4)\)     \\
  
\hline
\end{tabular}
\end{center}
\begin{center}
Table 1. \textbf{Note}: All logarithms here are taken to the respective base $r$.
\end{center}
\begin{center}
\begin{tabular}{|l      |c      |c                  |c      |c            |c              |c                  |c              |c              |r|}
\hline
                $r$  &  $k$  &  $\sqrt{n + 1}$  &   $n$  &  $\log r$  &   $\log k$    &   $r^{n}$     &       $W(r, k)$   &   $r^{n + 1}$  &  $r^{k^{2}}$ \\   
\hline
                $2$  &  $3$  &  $2$             &   $3$  &  $0.6931$  &   $1.0986$    &   $2^{3}$     &       $9$         &   $2^{4}$      &  $2^{9}$  \\
 
                $2$  &  $4$  &  $2.449\ldots$   &   $5$  &  $0.6931$  &   $1.3862$    &   $2^{5}$     &       $35$        &   $2^{6}$      &  $2^{16}$  \\
                
                $2$  &  $5$  &  $2.828\ldots$   &   $7$  &  $0.6931$  &   $1.6094$    &   $2^{7}$     &       $178$       &   $2^{8}$      &  $2^{25}$ \\ 

                $2$  &  $6$  &  $3.316\ldots$   &   $10$ &  $0.6931$  &   $1.7917$    &   $2^{10}$    &       $1132$      &   $2^{11}$     &  $2^{36}$  \\
                 
                $3$  &  $3$  &  $2$             &   $3$  &  $1.0986$  &   $1.0986$    &   $3^{3}$     &       $27$        &   $3^{4}$      &  $3^{9}$  \\
                
                $3$  &  $4$  &  $2.449\ldots$   &   $5$  &  $1.0986$  &   $1.3862$    &   $3^{5}$     &       $293$       &   $3^{6}$      &  $3^{16}$  \\

                $4$  &  $3$  &  $2$             &   $3$  &  $1.3862$  &   $1.0986$    &   $4^{3}$     &       $76$        &   $4^{4}$      &  $4^{9}$  \\
  
\hline
\end{tabular}
\end{center}
\begin{center}
Table 2. Logarithms taken to the base $e$.
\end{center}
\section{A lower Bound for the integer Exponent $n$}
Let $a$, \(d > 1\) be any two positive integers such that
\begin{equation}
a + id \in [1, W(r, k)], \: \: \forall \: i \in [0, k - 1]. 
\end{equation}
That is, the interval $[1, W(r, k)]$ on $\mathbb{R}$ contains the arithmetic progression \(a, a + d, a + 2d, \dots, a + (k - 1)d\), where \(a + (k - 1)d \leq W(r, k)\). Since \(W(r, k) \in [r^{n}, r^{n + 1})\), we then can derive a lower bound on the positive integer exponent $n$ for any integer $a$ in $[1, W(r, k)]$ for which 
\begin{equation}
a + (k - 1)d \leq W(r, k), 
\end{equation}
as we do for the following Theorem (Here we can take the logarithm to any base, including to the base $e$, or to base $10$, etc.).
\begin{theorem}
Let
$$
a, a + d, a + 2d, \cdots, a + (k - 1)d,
$$
be any arithmetic progression with $k$ terms, composed from any integer elements $a + id$, \(0 \leq i \leq k - 1\), all taken from the interval $[1, W(r, k)]$. Then 
\begin{equation}
n > \frac{\log\left(a + \frac{(k - 1)d}{2}\right)}{\log r} - 1.
\end{equation}
Moreover, \(d \leq \frac{W(r, k) - a}{k - 1}\).
\end{theorem}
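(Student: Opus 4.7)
The plan is to split the statement into its two independent inequalities and derive each from the given containment $W(r,k)\in[r^n,r^{n+1})$ together with the defining property $a+(k-1)d\le W(r,k)$ of the arithmetic progression. The second inequality is essentially a rearrangement, so I would handle it first; the first inequality is the more substantive one and rests on a single observation about the average of an AP.

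For the bound $d\le\frac{W(r,k)-a}{k-1}$, I would simply note that the largest term of the progression is $a+(k-1)d$, and by hypothesis this term belongs to $[1,W(r,k)]$, so $a+(k-1)d\le W(r,k)$. Subtracting $a$ and dividing through by $k-1$ (which is positive, since any nontrivial AP has $k\ge 2$) yields the claimed bound on $d$.

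For the lower bound on $n$, the key observation is that the midpoint of the AP, namely $a+\frac{(k-1)d}{2}$, is the arithmetic mean of the first and last terms, and so it lies between $a$ and $a+(k-1)d$. In particular, since $d>0$,
\begin{equation*}
a+\tfrac{(k-1)d}{2}\;\le\;a+(k-1)d\;\le\;W(r,k).
\end{equation*}
By Lemma 1.1 (or equivalently Statement (1) of the Introduction), $W(r,k)<r^{n+1}$, so chaining the inequalities gives $a+\frac{(k-1)d}{2}<r^{n+1}$. Taking $\log$ (to any base, since the identity is base-free after division) and dividing by $\log r>0$ yields
\begin{equation*}
\frac{\log\!\left(a+\frac{(k-1)d}{2}\right)}{\log r}\;<\;n+1,
\end{equation*}
which rearranges to the desired strict inequality for $n$.

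The main ``obstacle'' — if it deserves the name — is recognizing where the expression $a+\frac{(k-1)d}{2}$ comes from: it is exactly the average of the AP, and it is precisely this average (rather than the last term itself) that appears in the statement, so I want to make explicit that we are merely using the monotonicity $a+\frac{(k-1)d}{2}\le a+(k-1)d$ before bounding by $W(r,k)<r^{n+1}$. Once that is in place, both inequalities follow by a single application of the Lemma and elementary rearrangement; no asymptotic analysis from Section 2 is needed here.
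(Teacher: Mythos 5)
Your proof is correct, and the overall skeleton matches the paper's: both reduce the problem to the inequality $a + \frac{(k-1)d}{2} \le W(r,k) < r^{n+1}$, then take logarithms and divide by $\log r$, and both get the bound on $d$ by rearranging $a+(k-1)d \le W(r,k)$. The one place you diverge is in how the central inequality is obtained: the paper sums all $k$ terms of the progression, bounds each by $W(r,k)$, evaluates $1+2+\cdots+(k-1)=\frac{(k-1)k}{2}$, and divides through by $k$ — in effect showing that the mean of the AP is at most $W(r,k)$ — whereas you observe directly that the mean $a+\frac{(k-1)d}{2}$ is at most the largest term $a+(k-1)d$, which is at most $W(r,k)$ by hypothesis. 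Your route is shorter and makes the role of the expression $a+\frac{(k-1)d}{2}$ transparent; the paper's summation buys nothing extra here, since it ultimately uses only the same consequence. Either way the strict inequality $W(r,k)<r^{n+1}$ from Lemma 1.1 is what makes the final bound on $n$ strict, and you invoke it at the right point.
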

\begin{proof}
\begin{eqnarray}
a + id \in [1, W(r, k)]&\Longrightarrow&a + id \leq W(r, k) \: \forall \: i \in [0, k - 1]\\
                       &\Longrightarrow&\underbrace{a + a + d + a + 2d + \cdots + a + (k - 1)d}_{\mbox{k terms}}\nonumber\\
                       &\leq           &\underbrace{W(r, k) + W(r, k) + \cdots + W(r, k)}_{\mbox{k terms}} < kr^{n + 1}\nonumber\\
                       &\Longrightarrow&ka + \underbrace{(1 + 2 + 3 + \cdots + k - 1)d}_{\mbox{k terms}}\\
                       &\leq           &kW(r, k) < kr^{n + 1}\nonumber\\                                                 
                       &\Longrightarrow&ka + \frac{(k - 1)kd}{2} \leq kW(r, k) < kr^{n + 1},
\end{eqnarray}
where in Eqtn. (36) \(W(r, k) < r^{n + 1}\) follows from Lemma 1.1 in Section 1 (See also Eqtns. (2)--(3) and Eqtns. (6)--(7)), and the result
\begin{equation}
1 + 2 + 3 + \cdots + k - 1 = \frac{(k - 1)k}{2},
\end{equation}
we use to evaluate the finite sum in Eqtn. (35). So dividing by $k$ in Eqtn. (36) we derive
\begin{eqnarray}
a + \frac{(k - 1)d}{2}&\leq           &W(r, k) < r^{n + 1}\\
                      &\Longrightarrow&\log\left(a + \frac{(k - 1)d}{2}\right) \leq \log W(r, k) < (n + 1)\log r\nonumber\\
                      &\Longrightarrow&\log\left(a + \frac{(k - 1)d}{2}\right) < (n + 1)\log r\\
                      &\Longrightarrow&\frac{\log\left(a + \frac{(k - 1)d}{2}\right)}{\log r} < n + 1 \nonumber\\
                      &\Longrightarrow&\frac{\log\left(a + \frac{(k - 1)d}{2}\right)}{\log r} - 1 < n.
\end{eqnarray}
Finally since \(a + (k - 1)d \leq W(r, k)\) must hold since \(a + (k - 1)d \in [1, W(r, k)]\) as given, we obtain \(a + (k - 1)d \leq W(r, k)\) \(\Longrightarrow d \leq \frac{W(r, k) - a}{k - 1}\).   
\end{proof}
\subsection{Application of these Results to $W(2, 7)$}
At present the actual value of $W(2, 7)$ is unknown. Yet J. Rabung and M. Lotts~\cite{Rabung and Lotts} have indicated that $W(2, 7)$ has a lower bound of $3703$.\\
\indent Theorem 3.1 allows us to characterize a lower bound on $n$, where by Lemma 1.1,
\begin{equation}
W(2, 7) \in [2^{n}, 2^{n + 1}).
\end{equation} 
Let \(\log_{2}W(2, 7) = \delta(2, 7) \Longrightarrow W(2, 7) = 2^{\delta(2, 7)}\). We know automatically then that \(\delta(2, 7) \in [n, n + 1)\) for some positive integer exponent $n$ such that Eqtn. (41) holds. Since \(\log_{2}3703 = 11.8544\cdots\) we also know that \(3703 = 2^{11.8544\cdots} < 2^{\delta(2, 7)} < 2^{n + 1}\) \(\Longrightarrow \delta(2, 7) > 11.8544\cdots\). In a previous result~\cite{Betts1}, we showed that for the unknown van der Waerden number $W(2, 7)$ the positive exponent values of $n$, $\delta(2, 7)$ for which \(2^{n} \leq W(2, 7) < 2^{n + 1}\) and \(W(2, 7) = 2^{\delta(2, 7)}\) are true are such that \(n, \delta(2, 7) \in [11, 48]\). To obtain the lower bound of eleven here on $n$, $\delta(2, 7)$ (i.e., for \(W(2, 7) = 2^{\delta(2, 7)} \geq 2^{n}\) to be true) we derive, using the lower bound $3703$ on $W(2, 7)$ by J. Rabung and M. Lotts~\cite{Rabung and Lotts},~\cite{Betts1},
$$
n > \frac{\log_{2} W(2, 7)}{\log_{2} 2} - 1 > \frac{\log_{2} 3703}{\log_{2} 2} - 1 = 10.8544\cdots \Longrightarrow n \geq 11, 
$$
and since \(2^{\delta(2, 7)} > 3703\) \(\Longrightarrow \delta(2, 7) > 11.8544\cdots\) \(\Longrightarrow \delta(2, 7) > 11.8544\cdots \geq 11\). \\
\indent J. Rabung and M. Lotts~\cite{Rabung and Lotts}, have indicated that \(W(2, 8) > 11495\). Since \(11495 = 2^{13.4896\cdots} < 2^{14}\), here we show how to find the subset on $\mathbb{R}$ in which $W(2, 7)$ lies, if we have a \emph{valid assumption} about an upper bound on $n$, namely if we can assume that \(n < 16\). 
\begin{theorem}
Let \(W(2, 7) > 3703\), \(W(2, 7) = 2^{\delta(2, 7)}\) and \(n \geq 11, \delta(2, 7) > \log_{2}3703\). Then the following two statements are equivalent when applied to $W(2, 7)$:
\begin{enumerate}
\item 
\begin{equation}
n, \delta(2, 7) \in [11, 16) = [11, 12) \cup [12, 13) \cup [13, 14] \cup [14, 15) \cup [15, 16).
\end{equation}
\\
\item
\begin{equation}
2^{n}, W(2, 7) \in [2^{11}, 2^{16}) = [2^{11}, 2^{12})\cup [2^{12}, 2^{13})\cup [2^{13}, 2^{14}) \cup [2^{14}, 2^{15}) \cup [2^{15}, 2^{16}).
\end{equation}
\end{enumerate}
\end{theorem}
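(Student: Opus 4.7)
The plan is to reduce the claim to the single observation that the map $x \mapsto 2^x$ is a strictly increasing continuous bijection from $\mathbb{R}$ onto $(0, \infty)$, with inverse $\log_2$. Combined with the hypothesis $W(2,7) = 2^{\delta(2,7)}$ and with Lemma 1.1 (which gives $W(2,7) \in [2^n, 2^{n+1})$), this bijection will interchange the two sets of inequalities appearing in statements (1) and (2).

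For the direction (1) $\Longrightarrow$ (2), I would assume $11 \leq n < 16$ and $11 \leq \delta(2,7) < 16$. Applying $2^{(\cdot)}$ and using monotonicity yields $2^{11} \leq 2^n < 2^{16}$ and $2^{11} \leq 2^{\delta(2,7)} < 2^{16}$; substituting $W(2,7) = 2^{\delta(2,7)}$ gives the desired conclusion. For the converse (2) $\Longrightarrow$ (1), I would apply $\log_2$ to $2^{11} \leq 2^n < 2^{16}$ and to $2^{11} \leq W(2,7) < 2^{16}$, using $\log_2 2^n = n$ and $\log_2 W(2,7) = \delta(2,7)$ by definition, to recover the bounds in (1).

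The stated decompositions into unions of unit-exponent intervals are then routine: the partition $[11,16) = \bigcup_{i=11}^{15}[i, i+1)$ maps term-by-term under $x \mapsto 2^x$ to $[2^{11}, 2^{16}) = \bigcup_{i=11}^{15}[2^i, 2^{i+1})$, since a strictly increasing bijection sends half-open intervals bijectively to half-open intervals.

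The main obstacle here is pedagogical rather than mathematical: both statements are near-tautological restatements of the inverse relationship between $\log_2$ and exponentiation, so the proof really amounts to carefully lining up the definitions of $n$ and $\delta(2,7)$ against the partition structure without losing track of which quantity is a real exponent and which is an integer exponent. The genuine content — that $[11,16)$ is the correct interval to consider — has already been supplied in the discussion preceding the theorem, via the Rabung--Lotts lower bound $W(2,7) > 3703$ (giving $n \geq 11$ and $\delta(2,7) > 11.8544\ldots$) together with the working upper-bound assumption $n < 16$.
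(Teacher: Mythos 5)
Your proposal is correct and follows essentially the same route as the paper's own proof: both directions are handled by applying the strictly increasing maps $x \mapsto 2^{x}$ and $\log_{2}$ to the chains of inequalities $11 \leq n, \delta(2,7) < 16$ and $2^{11} \leq 2^{n}, W(2,7) < 2^{16}$, using $W(2,7) = 2^{\delta(2,7)}$ to pass between the two statements. Your added remark that the monotone bijection carries the partition $[11,16) = \bigcup_{i=11}^{15}[i, i+1)$ term-by-term onto $\bigcup_{i=11}^{15}[2^{i}, 2^{i+1})$ is a slightly cleaner way of justifying the union decompositions, which the paper simply writes out, but it is not a different argument.
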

\begin{proof}
\noindent ((2) $\Longrightarrow$ (1)): Assume Eqtn. (43) holds, where \(W(2, 7) = 2^{\delta(2, 7)}\). Then 
\begin{eqnarray}
2^{n}, 2^{\delta(2, 7)}&\in            &[2^{11}, 2^{12})\cup [2^{12}, 2^{13})\cup [2^{13}, 2^{14}) \cup [14, 15) \cup [15, 16)\nonumber\\
                       &               &= [2^{11}, 2^{16}) \nonumber\\
                       &\Longrightarrow&2^{11} \leq 2^{n}, 2^{\delta(2, 7)} < 2^{16}\nonumber\\
                       &\Leftrightarrow&11 \leq n, \delta(2, 7) < 16\nonumber\\
                       &\Leftrightarrow&n, \delta(2, 7) \in [11, 16) = [11, 12)\cup [12, 13) \cup [13, 14) \cup [14, 15) \cup [15, 16).\nonumber\\
\end{eqnarray}
\noindent ((1) $\Longrightarrow$ (2)): Assume Eqtn. (42) holds. Then 
\begin{eqnarray}
n, \delta(2, 7)&\in           &[11, 16) = [11, 12) \cup [12, 13) \cup [13, 14] \cup [14, 15) \cup [15, 16)\nonumber\\
               &\Longrightarrow&11 \leq n < 16, 11 < \log_{2}3703 < \delta(2, 7) < 16\nonumber\\
               &\Leftrightarrow&2^{11} \leq 2^{n}, 2^{\delta(2, 7)} < 2^{16}\nonumber\\
               &\Leftrightarrow&2^{n}, W(2, 7) \in [2^{11}, 2^{16})\nonumber\\
               &=              &[2^{11}, 2^{12})\cup [2^{12}, 2^{13})\cup [2^{13}, 2^{14}) \cup [2^{14}, 2^{15}) \cup [2^{15}, 2^{16}),\nonumber
\end{eqnarray} 
since \(\log_{2}W(2, 7) = \delta(2, 7) \Longrightarrow W(2, 7) = 2^{\delta(2, 7)}\).
\end{proof}
For the upper bound of forty--eight on $n$ when \(r = 2, k = 7\), we refer the Reader to our prior results~\cite{Betts1},~\cite{Betts2}. These results and previous work~\cite{Betts1} (See Table A) lead us to ask is \(\delta(2, 7) = \log_{2} W(2, 7) \in [11, 15)\) possible? Since 
\begin{equation}
3703 < W(2, 7) < r^{n + 1},
\end{equation}
and since, using the result \(3703 < W(2, 7)\) indicated by J. Rabung and M. Lotts~\cite{Rabung and Lotts} we have also
\begin{equation}
3703 = 2^{11} + 2^{10} + 2^{9} + 2^{6} + 2^{5} + 2^{4} + 2^{2} + 2^{1} + 2^{0} < W(2, 7) < 2^{n + 1},
\end{equation}
it follows that, since \(\log_{2}3703 = 11.854\cdots > 11\), either
\begin{equation}
2^{11} \leq 2^{n} < 2^{11} + 2^{10} + 2^{9} + 2^{6} + 2^{5} + 2^{4} + 2^{2} + 2^{1} + 2^{0} < W(2, 7) < 2^{n + 1},
\end{equation}
or else
\begin{equation}
2^{11} \leq 2^{11} + 2^{10} + 2^{9} + 2^{6} + 2^{5} + 2^{4} + 2^{2} + 2^{1} + 2^{0} < 2^{n} \leq W(2, 7) < 2^{n + 1},
\end{equation}
is true. In fact if we assume that even \(\delta(2, 7) = \log_{2} W(2, 7) \in [11, 16)\) is a valid assumption, it would follow from our approach (This paper. See also~\cite{Betts1},~\cite{Betts2}), that either \(W(2, 7) \in (3703, 2^{12})\), \(W(2, 7) \in [2^{12}, 2^{13})\), \(W(2, 7) \in [2^{13}, 2^{14})\), \(W(2, 7) \in [2^{14}, 2^{15})\) or else \(W(2, 7) \in [2^{15}, 2^{16})\). Actually we have that
$$
7 \geq \sqrt{n + 1} \Leftrightarrow 11 \leq n \leq 48 \Longrightarrow 3703 < 2^{n} \leq W(2, 7) < 2^{n + 1} \leq 2^{49}, 
$$
where
$$
W(2, 7) = 2^{n} + b_{n - 1}2^{n - 1} + b_{n - 2}2^{n - 2} + \ldots + b_{0},
$$
is true for some combination of nonnegative integers \(b_{n} = 1\),\(b_{n - 1}, b_{n - 2}, \ldots, b_{0} \in \{0, 1\}\).\\
\indent The possible intervals in which $W(2, 7)$ lies are listed in Appendix B.\\
\indent With regard to computational complexity it ought to be easier given the right algorithms, either to find or to estimate the positive real exponent $\delta(r, k)$, then to find or to estimate the value of each integer $W(r, k)$~\cite{Rabung and Lotts}. \\ 
\indent Next we prove a Corollary to establish a lower bound on $n$ given any integers \(a, a + d, \ldots, a + 6d \in [1, W(2, 7)]\), where \(W(2, 7) \in [2^{n}, 2^{n + 1})\).
\begin{corollary}
Suppose that for positive integer $a$ and common difference \(d > 1\), the AP
\begin{equation}
a, a + d, a + 2d, a + 3d, a + 4d, a + 5d, a + 6d,
\end{equation}
with each integer an element in $[1, W(2, 7)]$, is an AP in the interval $[1, W(2, 7)]$ where \(a + 6d \leq W(2, 7)\). Then \(n > \log_{2}(a + 3d) - 1\), where \(d \leq \frac{W(2, 7) - a}{6}\).
\end{corollary}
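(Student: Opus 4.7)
The plan is to recognize that Corollary 3.1 is simply the $r = 2$, $k = 7$ specialization of Theorem 3.1, so very little new work is needed. First I would verify that the hypotheses of Theorem 3.1 are met: the AP $a, a+d, \ldots, a+6d$ consists of $k = 7$ integer terms all lying in $[1, W(2,7)]$, with $a + (k-1)d = a + 6d \leq W(2,7)$, exactly as required. Then I would invoke Theorem 3.1 to obtain
$$
n > \frac{\log\!\left(a + \frac{(k-1)d}{2}\right)}{\log r} - 1, \qquad d \leq \frac{W(r,k) - a}{k - 1},
$$
and substitute $r = 2$, $k = 7$. With $k = 7$ we have $(k-1)/2 = 3$ and $k - 1 = 6$, and the change of base $\log(\cdot)/\log 2 = \log_2(\cdot)$ collapses the first inequality to $n > \log_2(a + 3d) - 1$, while the second becomes $d \leq (W(2,7) - a)/6$.

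For a fully self--contained proof I would re--run the short argument from Theorem 3.1 in this case: sum the seven inequalities $a + id \leq W(2,7) < 2^{n+1}$ for $i = 0, 1, \ldots, 6$ to get $7a + (1+2+\cdots+6)d = 7a + 21d \leq 7 W(2,7) < 7 \cdot 2^{n+1}$. Dividing by $7$ yields $a + 3d < 2^{n+1}$, and applying $\log_2$ gives $\log_2(a + 3d) < n + 1$, which rearranges to the required lower bound on $n$. The side bound $d \leq (W(2,7) - a)/6$ is an immediate rearrangement of the hypothesis $a + 6d \leq W(2,7)$.

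There is no real obstacle here: the corollary is a direct instance of the preceding theorem, and all that is required is careful bookkeeping of the constants $(k-1)/2 = 3$ and $k - 1 = 6$, together with the simplification of $\log/\log 2$ to $\log_2$. The main thing to flag in the write--up is the appeal to Lemma 1.1 (through Theorem 3.1) to justify the strict inequality $W(2,7) < 2^{n+1}$, since this is what converts the sum bound into a statement about $n$ rather than merely about $W(2,7)$ itself.
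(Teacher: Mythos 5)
Your proposal is correct and matches the paper's proof, which likewise deduces the corollary immediately from Theorem 3.1 by setting $r = 2$, $k = 7$ and taking logarithms base two so that the denominator $\log_{2}2 = 1$ disappears. The extra self-contained re-run of the summation argument is harmless but not needed.
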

\begin{proof}
This follows at once from Theorem 3.1, when in Eqtn. (40), \(k = 7, r = 2\) and the logarithms are taken base two, since then the denominator in Eqtn. (40) becomes \(log_{2}2 = 1\).
\end{proof}
\section{A Lower Bound on the Exponent $n$ expressed in Terms of $m$, $k$, and $r$}
Let
\begin{eqnarray}
W(r, k)&=&b_{n}r^{n} + b_{n - 1}r^{n - 1} + \cdots + b_{0} \in [r^{n}, r^{n + 1}),\\
W(r, k)&=&c_{m}k^{m} + c_{m - 1}k^{m - 1} + \cdots + c_{0} \in [k^{m}, k^{m + 1}),
\end{eqnarray}
be the two respective expansions of $W(r, k)$ into powers of $r$ and $k$. Here we find a lower bound on $n$ expressed in terms of $m$, $k$ and $r$. Logarithms are taken to any arbitrary base. 
\begin{theorem}
\begin{equation}
n > \frac{m\log k}{\log r} - 1.
\end{equation}
\end{theorem}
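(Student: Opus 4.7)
The plan is very short because the inequality is nearly immediate from the two expansions already in hand. The key observation is that both intervals $[r^{n}, r^{n+1})$ and $[k^{m}, k^{m+1})$ contain $W(r,k)$, so their intersection is nonempty (Statement (5) of the Introduction). I would exploit this by chaining the lower bound coming from the $k$-expansion with the upper bound coming from the $r$-expansion.

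First I would record that $k^{m} \leq W(r,k)$ from Eqtn.~(51), and that $W(r,k) < r^{n+1}$ from Eqtn.~(50) (this latter bound is exactly what Lemma~1.1 delivers). Combining these two inequalities gives $k^{m} < r^{n+1}$. Next I would apply $\log$ (to any base, since the final bound is ratio-invariant) to both sides, yielding $m \log k < (n+1) \log r$. Dividing through by $\log r$, which is positive because $r > 1$, and subtracting $1$ from both sides then produces the claimed bound
\begin{equation}
n > \frac{m \log k}{\log r} - 1. \nonumber
\end{equation}

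There is no real obstacle here; the only subtlety worth flagging is the strict inequality $k^{m} < r^{n+1}$, which holds because $W(r,k) < r^{n+1}$ strictly (as discussed in the Introduction, the case $W(r,k) = r^{n+1}$ is impossible by the floor-function argument there), while $k^{m} \leq W(r,k)$. The strict inequality on the right is what lets us conclude $n > \frac{m \log k}{\log r} - 1$ rather than merely $\geq$. One could also remark, as a sanity check against Table~2, that for $(r,k)=(2,6)$ one has $m=3$ and the bound reads $n > 3 \cdot \log 6 / \log 2 - 1 \approx 6.75$, consistent with the actual value $n=10$.
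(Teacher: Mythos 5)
Your proposal is correct and follows essentially the same route as the paper: chain $k^{m} \leq W(r,k) < r^{n+1}$ from the two expansions, take logarithms, divide by $\log r$, and subtract one. Your remark on where the strictness enters and the numerical sanity check are welcome additions but do not change the argument.
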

\begin{proof}
From Eqtns. (50)--(51), we have both that 
\begin{equation}
k^{m} \leq W(r, k),
\end{equation}
and
\begin{equation}
W(r, k) < r^{n + 1},
\end{equation}
from which we derive
\begin{eqnarray}
k^{m}&\leq           &W(r, k) < r^{n + 1}\\
     &\Longrightarrow&m\log k \leq \log W(r, k) < (n + 1)\log r\nonumber\\
     &\Longrightarrow&\frac{m\log k}{\log r} \leq \frac{\log W(r, k)}{\log r} < n + 1\nonumber\\
     &\Longrightarrow&\frac{m\log k}{\log r} - 1 \leq \frac{\log W(r, k)}{\log r} - 1 < n\nonumber\\
     &\Longrightarrow&\frac{m\log k}{\log r} - 1 < n.
\end{eqnarray}
\end{proof}
In a previous paper~\cite{Betts2}, we treated the result
\begin{equation}
n > \frac{\log W(r, k)}{\log r} - 1.
\end{equation}
However since \(k^{m} \leq W(r, k) < k^{m + 1}\) one also can convince oneself that
\begin{equation}
m > \frac{\log W(r, k)}{\log k} - 1.
\end{equation}
\section{The Limits \(n \rightarrow \infty \sqrt[n]{W(r, k)}\) and \(m \rightarrow \infty \sqrt[m]{W(r,k)}\)}
We demonstrate these with the following Theorem. Note that ordinarily, \(\sqrt[n]{W(r, k)} \not = \sqrt[m]{W(r, k)}\) is true infinitely often and all the respective, corresponding terms in $n$, $m$,
$$
\sqrt[n]{W(r, k)}, \sqrt[m]{W(r, k)},
$$ 
are not identical except when \(n = m, r = k\). In fact let \(n \in \{n_{1}, n_{2}, \ldots\} \subset \mathbb{N}\) and \(m \in \{m_{1}, m_{2}, \ldots\} \subset \mathbb{N}\), where \(n_{1} < n_{2} < \cdots\), \(m_{1} < m_{2} < \cdots\). Then the two sequences of integer exponents \((n)_{n = n_{1}}^{\infty}, (m)_{m = m_{1}}^{\infty}\) are not identical, certainly not for example when \(r \not = k, r \ll k, r \gg k\) hold. Even the two intervals $[r^{n}, r^{n + 1})$, $[k^{m}, k^{m + 1})$, on $\mathbb{R}$ are not identical when \(r \not = k\). In fact \((n - m)_{n = n_{1}, m = m_{1}}^{\infty} \not = 0\) is true infinitely often. One should be careful to keep this in mind for the next Theorem.
\begin{theorem}
Let, respectively,
\begin{equation}
W(r, k) = b_{n}r^{n} + b_{n - 1}r^{n - 1} + \cdots + b_{0} \in [r^{n}, r^{n + 1}),
\end{equation}
and
\begin{equation}
W(r, k) = c_{m}k^{m} + c_{m - 1}k^{m - 1} + \cdots + c_{0} \in [k^{m}, k^{m + 1}),
\end{equation}
where either at least \(n \gg r, r \not = k\) is true for the expansion into powers of $r$ or else \(m \gg k, k \not = r\) is true for the expansion into powers of $k$, respectively. Then \(\lim_{n \rightarrow \infty}\sqrt[n]{W(r, k)} = r\) on $[r, r^{n + 1})$ and \(\lim_{m \rightarrow \infty}\sqrt[m]{W(r, k)} = k\) on $[k, k^{m + 1})$.
\end{theorem}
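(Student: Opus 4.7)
The plan is to apply the squeeze theorem to the inclusions $W(r,k) \in [r^n, r^{n+1})$ and $W(r,k) \in [k^m, k^{m+1})$ that are available from Lemma 1.1 and the parallel statement for base $k$. Both bounds collapse to the desired limit once one takes the appropriate root and sends the exponent to infinity.

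First I would handle the base-$r$ case. From $r^n \leq W(r,k) < r^{n+1}$, taking the positive $n$-th root (which preserves inequalities for positive quantities) yields
\begin{equation}
r \leq \sqrt[n]{W(r,k)} < r^{(n+1)/n} = r \cdot r^{1/n}.\nonumber
\end{equation}
Since $r$ is a fixed integer $\geq 2$ while $n \to \infty$, we have $\lim_{n \to \infty} r^{1/n} = 1$, so the upper bound tends to $r$ while the lower bound is already $r$. The squeeze theorem delivers $\lim_{n \to \infty} \sqrt[n]{W(r,k)} = r$. Note that we are implicitly using the hypothesis $n \gg r$ to guarantee that we are in an asymptotic regime where $n$ grows without bound; the $W(r,k)$ under consideration is the sequence of van der Waerden numbers whose base-$r$ expansions have increasingly large leading exponent $n$.

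The base-$k$ case is completely symmetric. From $k^m \leq W(r,k) < k^{m+1}$, taking $m$-th roots gives $k \leq \sqrt[m]{W(r,k)} < k \cdot k^{1/m}$, and $k^{1/m} \to 1$ as $m \to \infty$, so again by squeezing $\lim_{m \to \infty} \sqrt[m]{W(r,k)} = k$.

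The only subtle issue, which is more interpretive than technical, is to justify that the limits are being taken along the correct sequence: for each van der Waerden number in the family one associates its own $n$ and $m$, and the hypothesis $n \gg r$ (respectively $m \gg k$) is what allows us to view $n$ (respectively $m$) as the running variable tending to infinity as one moves through larger and larger $W(r,k)$. Once this is acknowledged, there is no real obstacle; the proof is essentially the classical argument that $\sqrt[n]{x_n} \to L$ whenever $L^n \leq x_n < L^{n+1}$.
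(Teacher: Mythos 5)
Your proof is correct, but it takes a different (and shorter) route than the paper's own argument. The paper works through the digit expansion: it bounds $b_{n}r^{n} + b_{n-1}r^{n-1} + \cdots + b_{0}$ between $r^{n}$ and $(n+1)b_{n}r^{n}$, takes $n$-th roots, and then uses $\lim_{n\to\infty}(n+1)^{1/n} = 1$ together with $\lim_{n\to\infty}b_{n}^{1/n} \leq \lim_{n\to\infty}(r-1)^{1/n} = 1$ before squeezing (and symmetrically for the base-$k$ expansion). You instead bypass the expansion entirely and squeeze directly from the interval membership $r^{n} \leq W(r,k) < r^{n+1}$, using only $r^{1/n} \to 1$. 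This is cleaner: it needs one elementary limit rather than two, and it avoids the intermediate inequality $b_{n}r^{n} + \cdots + b_{0} < (n+1)b_{n}r^{n}$. In fact the paper itself concedes the point --- the remark immediately following the proof of Theorem 5.1 observes that one ``could have considered'' exactly your argument and obtained the same two limits. Your closing caveat about what sequence the limit is taken along (each $W(r,k)$ carrying its own $n$ and $m$, with the hypothesis $n \gg r$ or $m \gg k$ signalling the asymptotic regime) is a fair reading of the theorem's somewhat informal statement and is consistent with how the paper treats it.
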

\begin{proof}
For the first limit and by substitution with the right hand side of Eqtn. (59) so that
\begin{equation}
\sqrt[n]{W(r, k)} = \sqrt[n]{b_{n}r^{n} + b_{n - 1}r^{n - 1} + \cdots + b_{0}},
\end{equation}
we get
\begin{eqnarray}
r^{n}&\leq           &b_{n}r^{n} + b_{n - 1}r^{n - 1} + \cdots + b_{0} < (n + 1)b_{n}r^{n}\\
     &\Longrightarrow&\lim_{n \rightarrow \infty}r \leq \lim_{n \rightarrow \infty} \sqrt[n]{b_{n}r^{n} + b_{n - 1}r^{n - 1} + \cdots + b_{0}}\\
     &\leq           &\lim_{n \rightarrow \infty}(n + 1)^{\frac{1}{n}}b_{n}^{\frac{1}{n}}r.
\end{eqnarray}
We have that
\begin{equation}
\lim_{n \rightarrow \infty}(n + 1)^{\frac{1}{n}}b_{n}^{\frac{1}{n}}r \leq r,
\end{equation}
since \(\lim_{n \rightarrow \infty}(n + 1)^{\frac{1}{n}} = 1\) and \(\lim_{n \rightarrow \infty}b_{n}^{\frac{1}{n}} \leq \lim_{n \rightarrow \infty}(r - 1)^{\frac{1}{n}} = 1\). Using these results in Eqtns. (63)--(64) gets us
\begin{eqnarray}
\lim_{n \rightarrow \infty}r&\leq           &\lim_{n \rightarrow \infty} \sqrt[n]{b_{n}r^{n} + b_{n - 1}r^{n - 1} + \cdots + b_{0}} \leq r\\
                            &\Longrightarrow&\lim_{n \rightarrow \infty} \sqrt[n]{b_{n}r^{n} + b_{n - 1}r^{n - 1} + \cdots + b_{0}} = \lim_{n \rightarrow \infty}\sqrt[n]{W(r, k}\\
                            &=              &r,
\end{eqnarray}
by substitution, by the fact that we have the limit
\begin{eqnarray}
\lim_{n \rightarrow \infty}\sqrt[n]{b_{n}r^{n} + b_{n - 1}r^{n - 1} + \cdots + b_{0}}&=   &r\lim_{n \rightarrow \infty}b_{n}^{\frac{1}{n}}\sqrt[n]{1 + \frac{b_{n - 1}}{b_{n}r} + \frac{b_{n - 2}}{b_{n}r^{2}} + \cdots + \frac{b_{0}}{b_{n}r^{n}}}\nonumber\\
                                                                                     &\leq&r\lim_{n \rightarrow \infty}(r - 1)^{\frac{1}{n}}\sqrt[n]{1 + \frac{b_{n - 1}}{b_{n}r} + \frac{b_{n - 2}}{b_{n}r^{2}} + \cdots + \frac{b_{0}}{b_{n}r^{n}}}\nonumber\\
                                                                                     &\leq&r,\nonumber
\end{eqnarray}
for the expression between the two inequality symbols in Eqtn. (66) and by application of the Pinching (``Squeeze") Theorem in Eqtns. (66)--(68).\\
\indent Similarly we obtain for the second limit in the Theorem and by substitution with the right hand side of Eqtn. (60) so that
\begin{equation}
\sqrt[m]{W(r, k)} = \sqrt[m]{c_{m}k^{m} + c_{m - 1}k^{m - 1} + \cdots + c_{0}},
\end{equation}
we get
\begin{eqnarray}
k^{m}&\leq           &c_{m}k^{m} + c_{m - 1}k^{m - 1} + \cdots + c_{0} < (m + 1)c_{m}k^{m}\\
     &\Longrightarrow&\lim_{m \rightarrow \infty}k \leq \lim_{m \rightarrow \infty} \sqrt[m]{c_{m}k^{m} + c_{m - 1}k^{m - 1} + \cdots + c_{0}}\\
     &\leq           &\lim_{m \rightarrow \infty}(m + 1)^{\frac{1}{m}}c_{m}^{\frac{1}{m}}k.
\end{eqnarray}
It follows then that
\begin{equation}
\lim_{m \rightarrow \infty}(m + 1)^{\frac{1}{m}}c_{m}^{\frac{1}{m}}k \leq k,
\end{equation}
since \(\lim_{m \rightarrow \infty}(m + 1)^{\frac{1}{m}} = 1\) and \(\lim_{m \rightarrow \infty}c_{m}^{\frac{1}{m}} \leq \lim_{m \rightarrow \infty}(k - 1)^{\frac{1}{m}} = 1\). Using these results in Eqtns. (71)--(72) gets us
\begin{eqnarray}
\lim_{m \rightarrow \infty}k&\leq           &\lim_{m \rightarrow \infty} \sqrt[m]{c_{m}k^{m} + c_{m - 1}k^{m - 1} + \cdots + c_{0}} \leq k\\
                            &\Longrightarrow&\lim_{m \rightarrow \infty} \sqrt[m]{c_{m}k^{m} + c_{m - 1}k^{m - 1} + \cdots + c_{0}}\\
                            &=              &\lim_{m \rightarrow \infty}\sqrt[m]{W(r, k)}\\
                            &=              &k,
\end{eqnarray}
again by substitution, by the fact that we have 
\begin{eqnarray}
\lim_{m \rightarrow \infty}\sqrt[m]{c_{m}k^{m} + c_{m - 1}k^{m - 1} + \cdots + c_{0}}&=   &k\lim_{m \rightarrow \infty}c_{m}^{\frac{1}{m}}\sqrt[m]{1 + \frac{c_{m - 1}}{c_{m}k} + \frac{c_{m - 2}}{c_{m}k^{2}} + \cdots + \frac{c_{0}}{c_{m}k^{m}}}\nonumber\\
                                                                                     &\leq&k\lim_{m \rightarrow \infty}(k - 1)^{\frac{1}{m}}\sqrt[m]{1 + \frac{c_{m - 1}}{c_{m}k} + \frac{c_{m - 2}}{c_{m}k^{2}} + \cdots + \frac{c_{0}}{c_{m}k^{m}}}\nonumber\\
                                                                                     &\leq&k,\nonumber
\end{eqnarray}
for the limit between the two inequality symbols in Eqtn. (74) and by application of the Pinching Theorem in Eqtns. (74)--(77).
\end{proof}
For Theorem 5.1 we could have considered
$$
r^{n} \leq W(r, k) < r^{n + 1} \Longrightarrow \lim_{n \rightarrow \infty}r \leq \lim_{n \rightarrow \infty} \sqrt[n]{W(r, k)} \leq \lim_{n \rightarrow \infty} r\cdot r^{\frac{1}{n}},
$$
and
$$
k^{m} \leq W(r, k) < k^{m + 1} \Longrightarrow \lim_{m \rightarrow \infty}k \leq \lim_{m \rightarrow \infty} \sqrt[m]{W(r, k)} \leq \lim_{m \rightarrow \infty} k\cdot k^{\frac{1}{m}},
$$
taking $n^{th}$ roots and $m^{th}$ roots respectively, then applying the Pinching theorem to derive the same two limits \(\lim_{n \rightarrow \infty} \sqrt[n]{W(r, k)} = r\), \(\lim_{m \rightarrow \infty} \sqrt[m]{W(r, k)} = k\), instead of proceeding with
$$
r^{n} \leq b_{n}r^{n} + b_{n - 1}r^{n - 1} + \cdots + b_{0} < (n + 1)b_{n}r^{n},
$$
then taking $n^{th}$ roots throughout, and
$$
k^{m} \leq c_{m}k^{m} + c_{m - 1}k^{m - 1} + \cdots + c_{0} < (m + 1)c_{m}k^{m}.
$$
then taking $m^{th}$ roots. The limits can be compared to two other previous results we obtained for large values of $\log_{r}W(r, k)$ and $\log_{k}W(r, k)$ in Theorem 2.1, namely \(\log_{r}W(r, k) = n + O(1)\) and \(\log_{k}W(r, k) = m + O(1)\) (See also~\cite{Betts1}). 
\section{One more Theorem, a Corollary and Concluding Remarks}
In this concluding Section we provide one more Theorem and a Corollary, to expand further the results on $W(2, 7)$ in Section 3.
\begin{theorem}
Let
\begin{eqnarray}
W(2, 7)&=&2^{n} + b_{n - 1}\cdot 2^{n - 1} + \cdots + b_{0} \in [2^{n}, 2^{n + 1}),\\
W(2, 7)&=&c_{m}7^{m} + c_{m - 1}\cdot 7^{m - 1} + \cdots + c_{0} \in [7^{m}, 7^{m + 1}),
\end{eqnarray}
For some nonnegative integers
\begin{eqnarray}
b_{n} = 1, \: b_{n - 1}, \ldots, b_{0} \in \{0, 1\},\\
c_{m} \in [1, 6], \: \: c_{m - 1}, \ldots, c_{0} \in [0, 6] \subset \mathbb{R}.
\end{eqnarray}
Then \(n \geq 11\) and \(m \geq 4\).
\end{theorem}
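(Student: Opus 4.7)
The plan is to read off both bounds directly from the lower bound $W(2,7) > 3703$ of Rabung and Lotts, together with Lemma 1.1, which tells us that the exponents $n$ and $m$ in the two base expansions are the unique integers for which $W(2,7) \in [2^n, 2^{n+1})$ and $W(2,7) \in [7^m, 7^{m+1})$. In other words, $n = \lfloor \log_2 W(2,7) \rfloor$ and $m = \lfloor \log_7 W(2,7) \rfloor$, so the whole argument reduces to computing two floor values from below.

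First I would establish $n \geq 11$ by contradiction, exactly as in the passage preceding Theorem 3.2. Suppose $n \leq 10$. Then by Lemma 1.1 we would have $W(2,7) < 2^{n+1} \leq 2^{11} = 2048$, contradicting $W(2,7) > 3703$. Equivalently, one can feed the Rabung--Lotts bound into the inequality $n > \frac{\log W(2,7)}{\log r} - 1$ with $r=2$, obtaining $n > \log_2 3703 - 1 = 10.854\ldots$, whence $n \geq 11$ since $n \in \mathbb{N}$.

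Next I would establish $m \geq 4$ by the same contradiction with radix $r=7$. Suppose $m \leq 3$. Then by Lemma 1.1 we would have $W(2,7) < 7^{m+1} \leq 7^4 = 2401$, again contradicting $W(2,7) > 3703$. Equivalently, $m > \log_7 3703 - 1 = \frac{\log 3703}{\log 7} - 1 \approx 3.22\ldots$, so $m \geq 4$. A third, even more compact way to phrase both bounds simultaneously is to invoke Theorem 4.1, which gives $n > \frac{m\log k}{\log r} - 1$; but the cleanest route is just the direct two-line squeeze of $W(2,7)$ between the Rabung--Lotts lower bound $3703$ and the upper bounds $2^{n+1}$, $7^{m+1}$ provided by Lemma 1.1.

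There is essentially no obstacle: the result is a clean arithmetic corollary of the Rabung--Lotts lower bound together with the positional expansion guaranteed by Lemma 1.1. The only thing to be slightly careful about is the bookkeeping between ``$n$'' as a floor of a logarithm and ``$n$'' as the top exponent in the finite expansion, but this identification is precisely what Lemma 1.1 (with Eqtns.~(6)--(9)) already pins down, so no new ideas are needed.
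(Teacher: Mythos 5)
Your proposal is correct and follows essentially the same route as the paper: both squeeze $W(2,7)$ between the Rabung--Lotts lower bound $3703$ and the upper bounds $2^{n+1}$ and $7^{m+1}$ guaranteed by the positional expansions, then take $\log_2$ and $\log_7$ to get $n > \log_2 3703 - 1 = 10.854\ldots$ and $m > \log_7 3703 - 1 = 3.222\ldots$, hence $n \geq 11$ and $m \geq 4$. Your contradiction phrasing ($n \leq 10$ forces $W(2,7) < 2048$; $m \leq 3$ forces $W(2,7) < 2401$) is just the contrapositive of the paper's inequality chain, so no substantive difference.
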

\begin{proof}
Recall that, from J. Rabung and M. Lotts~\cite{Rabung and Lotts}, \(W(2, 7) > 3703\). So for the positive integer exponent $n$,
\begin{eqnarray}
2^{11}&<              &3703 < W(2, 7) < 2^{n + 1}\\
      &\Longrightarrow&11 < \log_{2}3703 < \log_{2}W(2, 7) < (n + 1)\log_{2}2,\nonumber\\
      &\Longrightarrow&11 < \log_{2}3703 < n + 1,\\
      &\Longrightarrow&10 < \log_{2}3703 - 1 < n\nonumber \\
      &\Longrightarrow&10 < 10.8544\cdots < n\\
      &\Longrightarrow&11 \leq n.
\end{eqnarray}
For the positive integer exponent $m$,
\begin{eqnarray}
7^{4}&<              &3703 < W(2, 7) < 7^{m + 1}\\
     &\Longrightarrow&4 < \log_{7}3703 < \log_{7}W(2, 7) < (m + 1)\log_{7}7,\nonumber\\
     &\Longrightarrow&4 < \log_{7}3703 < m + 1,\\
     &\Longrightarrow&3 < \log_{7}3703 - 1 < m\nonumber \\
     &\Longrightarrow&3 < 3.22267\cdots < m\\
     &\Longrightarrow&4 \leq m.
\end{eqnarray}
Hence \(n \geq 11\) and \(m \geq 4\).
\end{proof}
We just have shown that, for van der Waerden number $W(2, 7)$, \(n \geq 11\). Letting \(k = 7, r = 2\) and as in Theorem 6.1,
\begin{equation}
W(2, 7) = 2^{n} + b_{n - 1}\cdot 2^{n - 1} + \cdots + b_{0}, 
\end{equation}
\(b_{n - 1}, \ldots, b_{0} \in \{0, 1\}\), recall Statement (3) in Section 1, as well as our previous results~\cite{Betts1},~\cite{Betts2}. So if the condition in Statement 3, Section 1 in the Introduction holds for $W(2, 7)$--and in fact this condition in Statement 3, Section 1 \emph{does hold already} for all the van der Waerden numbers 
$$
W(2,3), W(2, 4), W(2, 5), W(2, 6), W(3, 3), W(3, 4), W(4, 3),
$$
known so far~\cite{Betts1}~\cite{Betts2} (See Table 2, this paper), and since we have also the fact that \(3703 < W(2, 7) < 2^{n + 1}\) (See Eqtns. (2)--(3), in Section 1, and the example shown with $W(2, 6)$ in Eqtn. (5), Section 1),
\begin{equation}
3703 < W(2, 7) < 2^{n + 1} \leq 2^{49},  
\end{equation}
is true provided \(7 \geq \sqrt{n + 1} \Leftrightarrow n \leq 48\) \(\Longrightarrow n \in [11, 48]\).\\
\begin{corollary}
Suppose Statement (3), Section 1, is true for the van der Waerden number $W(2, 7)$. That is, for some positive integer exponent \(n \geq 11\), such that
\begin{equation}
W(2, 7) \in [2^{n}, 2^{n + 1}),
\end{equation}
where $W(2, 7)$ has the expansion as in Eqtn. (90) with \(b_{n} = 1\), \(b_{n - 1}, \ldots, b_{0} \in \{0, 1\}\), suppose
\begin{equation}
W(2, 7) < 2^{n + 1} \leq 2^{49} \Leftrightarrow (7 \geq \sqrt{n + 1} \Leftrightarrow n \leq 48),
\end{equation}
holds also for $W(2, 7)$, just as Statement (3), Section 1 holds already for $W(2, 3)$, $W(2, 4)$, $W(2, 5)$, $W(2, 6)$, $W(3, 3)$, $W(3, 4)$ and $W(4, 3)$. Then \(W(2, 7) \in [2^{11}, 2^{48})\).
\end{corollary}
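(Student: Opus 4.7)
The plan is to combine the unconditional lower bound $n \geq 11$ already established in Theorem 6.1 with the conditional upper bound $n \leq 48$ supplied by the hypothesis, and then express the membership of $W(2,7)$ as a union of dyadic intervals. Since both bounds constrain the same integer exponent $n$ for which $W(2,7) \in [2^n, 2^{n+1})$, the corollary will reduce to a short bookkeeping argument once the pieces are assembled.

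Concretely, I would proceed as follows. First, I would cite Theorem 6.1 to record $n \geq 11$; this step uses only the Rabung--Lotts bound $W(2,7) > 3703$ together with $\log_2 3703 \approx 11.854 > 11$. Second, I would unpack the standing hypothesis: by the biconditional in Statement (3) of Section 1, the condition $7 \geq \sqrt{n+1}$ is equivalent to $n \leq 48$, and this in turn forces $W(2,7) < 2^{n+1} \leq 2^{49}$. Third, I would combine the two bounds to obtain $n \in \{11, 12, \ldots, 48\}$ and then invoke Lemma 1.1 to write
\[
W(2,7) \;\in\; \bigcup_{n=11}^{48} [2^n,\, 2^{n+1}) \;=\; [2^{11},\, 2^{49}).
\]

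There is no substantive obstacle here: the corollary is essentially a reassembly of Lemma 1.1, Theorem 6.1, and Statement (3). The only step that needs a line of justification is the telescoping identity $[2^{11}, 2^{12}) \cup \cdots \cup [2^{48}, 2^{49}) = [2^{11}, 2^{49})$, which is immediate from $2^{n+1} = 2^{(n+1)}$ at every junction. The one point I would flag before finalizing is the apparent mismatch between the stated conclusion, which gives the upper endpoint as $2^{48}$, and the bookkeeping above, which naturally yields $2^{49}$; this is consistent with Section 3 of the paper, where under the same hypothesis the author writes $W(2,7) < 2^{n+1} \leq 2^{49}$. So the intended conclusion is almost certainly $[2^{11}, 2^{49})$, and an off-by-one in the displayed statement should be corrected rather than worked around in the proof.
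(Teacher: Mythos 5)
Your proposal is correct and takes essentially the same route as the paper, whose entire proof is the one-line observation that Theorem 6.1 gives \(n \geq 11\) and Statement (3) gives \(n \leq 48\); you simply make the union \(\bigcup_{n = 11}^{48} [2^{n}, 2^{n + 1}) = [2^{11}, 2^{49})\) explicit. Your flag about the endpoint is also right: the hypothesis \(n \leq 48\) only yields \(W(2, 7) < 2^{n + 1} \leq 2^{49}\), and the paper itself confirms this by writing \(2^{11} \leq W(2, 7) < 2^{49}\) immediately after the Corollary and by listing \([2^{48}, 2^{49})\) among the admissible intervals in Appendix B, so the stated conclusion \(W(2, 7) \in [2^{11}, 2^{48})\) is an off-by-one slip that should read \([2^{11}, 2^{49})\).
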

\begin{proof}
This follows at once by applying first Theorem 6.1 which establishes that, at the very least, \(n \geq 11\), then Statement (3) (in the Introduction), to $W(2, 7)$.
\end{proof}
So if Statement (3), Section 1 in the Introduction does apply to $W(2, 7)$, computational methods, perhaps with the use of some SAT algorithm, one day ought to verify that \(2^{11} \leq W(2, 7) < 2^{49}\). That is, if the Corollary holds,
\begin{equation}
W(2, 7) \in [3703, 281474976710656] \subset [2048, 5629499534421312) \subset \mathbb{R}.
\end{equation}
\indent Note that \(2^{12} = 4096\), and that \(\sqrt[12]{3703} = 1.983259\cdots, \sqrt[12]{4096} = 2\), so in comparison to these results and using Theorem 5.1, the integer exponent $n$ has a value such that $\sqrt[n]{W(2, 7)}$ ought to be close to the value two. \\ 
\indent Given any $r$, $k$, is it possible to develop a fast algorithm that can approximate or estimate the value of the integer exponent $n$, such that
\begin{equation}
W(r, k) \in [r^{n}, r^{n + 1})?
\end{equation}
If there does exist such an algorithm it can reduce the computational time and bit compexity required to determine not only the value for any unknown $W(r, k)$ such as $W(2, 7)$, but also for the lower bound $r^{n}$.  
\pagebreak
\appendix
\section{A necessary and sufficient Condition for which \(W(r, k) < r^{k^{2}}\)}
The Proof to Theorem A.1 is very straightforward. Therefore we have chosen to put it in an Appendix instead of in the main part of the paper.\\
\indent Here we provide the proof for Statement 3, Section 1, in the Introduction. Note first however, that if 
$$
k^{2} \leq n \Longrightarrow r^{k^{2}} \leq W(r, k) < r^{n + 1},
$$
actually does occur for some van der Waerden numbers that are unknown at present, then this condition \(k^{2} \leq n\) \emph{does not occur always for each and every} $W(r, k)$ known and unknown, because it certainly does fail to hold for all the known van der Waerden numbers we know and that are listed in Table 2. We know this because we can compute the value of the exponent $n$ for each of these then compare the size of $k$ with the size of $\sqrt{n + 1}$ for each of these known van der Waerden numbers (See Table 2). Also the condition \(n < k^{2} < n + 1\) such that \(n < k^{2} < n + 1 \Longrightarrow r^{n} < r^{k^{2}} < r^{n + 1}\) is impossible because the interval $(n, n + 1)$ is open on $\mathbb{R}$ and contains no integers.\\
\indent The derivations in Eqtns. (98)--(100) and Eqtns. (100)--(102) are based on the known properties of the powers of an integer having positive integer exponents and the properties of logarithms. 
\begin{theorem}
Let \(W(r, k) \in [r^{n}, r^{n + 1}) \in \mathbb{R}\), for the reasons discussed already in Section 1, meaning
\begin{equation}
W(r, k) = b_{n}r^{n} + b_{n - 1}r^{n - 1} + \cdots + b_{0} \in [r^{n}, r^{n + 1}).
\end{equation}
Then a necessary and sufficient condition for \(W(r, k) < r^{k^{2}}\) to hold is
\begin{equation}
(k \geq \sqrt{n + 1} \Leftrightarrow n \leq k^{2} - 1).
\end{equation}
\end{theorem}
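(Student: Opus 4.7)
The plan is to prove the biconditional by handling the two directions separately, leveraging Lemma 1.1 in the introduction (which pins down $W(r,k) \in [r^n, r^{n+1})$) and the monotonicity of the exponential $x \mapsto r^x$, valid because $r > 1$. Throughout, I will use the fact that $n$ and $k^2$ are nonnegative integers, which will let me convert strict inequalities between integers into non-strict ones after subtracting $1$.

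For the sufficiency direction $(\Leftarrow)$, I would begin by assuming $k \geq \sqrt{n+1}$, which is equivalent to $n+1 \leq k^2$, i.e., $n \leq k^2 - 1$. Exponentiating with base $r > 1$, this gives $r^{n+1} \leq r^{k^2}$. Combining with the upper bound $W(r,k) < r^{n+1}$ supplied by Lemma 1.1 yields the chain $W(r,k) < r^{n+1} \leq r^{k^2}$, which is the desired conclusion. This direction is really just monotonicity plus the interval containment from Lemma 1.1.

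For the necessity direction $(\Rightarrow)$, I would assume $W(r,k) < r^{k^2}$ and use the lower bound $r^n \leq W(r,k)$ from Lemma 1.1 to obtain $r^n \leq W(r,k) < r^{k^2}$. Taking $\log_r$ (monotonic since $r > 1$) gives $n < k^2$. Here the one genuinely content-bearing step occurs: because both $n$ and $k^2$ are integers, $n < k^2$ forces $n \leq k^2 - 1$, equivalently $n+1 \leq k^2$, equivalently $k \geq \sqrt{n+1}$. This returns us to the compound condition in the statement.

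I do not anticipate any real obstacle, since the only subtlety is the use of integrality to tighten $n < k^2$ to $n \leq k^2 - 1$. I would also remark briefly, as the surrounding discussion in the introduction already does, on why the alternative scenarios $k^2 \leq n$ and $n < k^2 < n+1$ are ruled out: the former would force $r^{k^2} \leq W(r,k)$, contradicting $W(r,k) < r^{k^2}$ in this direction, and the latter is impossible since the open interval $(n, n+1) \subset \mathbb{R}$ contains no integers (and in particular no integer squares). These remarks are not strictly part of the proof but explain why the characterization in terms of $k \geq \sqrt{n+1}$ captures exactly the right dichotomy.
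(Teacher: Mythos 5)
Your proposal is correct, and the sufficiency direction coincides with the paper's: from $n \leq k^{2} - 1$ you get $r^{n+1} \leq r^{k^{2}}$ by monotonicity and conclude with the upper bound $W(r,k) < r^{n+1}$ from Lemma 1.1. The necessity direction, however, is handled differently, and your version is arguably the cleaner and more faithful one. The paper's necessity argument takes as its hypothesis the entire chain $r^{n} \leq W(r,k) < r^{n+1} \leq r^{k^{2}}$, so that $n + 1 \leq k^{2}$ drops out immediately upon taking $\log_{r}$ of the already-assumed inequality $r^{n+1} \leq r^{k^{2}}$; no integrality is needed, but the hypothesis being used is stronger than the condition $W(r,k) < r^{k^{2}}$ that the theorem statement literally asks to characterize. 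You instead assume only $W(r,k) < r^{k^{2}}$, combine it with the lower bound $r^{n} \leq W(r,k)$ to get $n < k^{2}$, and then invoke the integrality of $n$ and $k^{2}$ to tighten this to $n \leq k^{2} - 1$. That integrality step is the genuinely content-bearing move your route requires and the paper's avoids, but in exchange you obtain necessity from the weaker, literal hypothesis, which closes a small logical gap in the paper's formulation. Your closing remarks about the impossibility of $n < k^{2} < n+1$ mirror the discussion the paper places just before the theorem, so nothing is lost there.
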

\begin{proof}
\emph{Sufficiency}: Suppose \(k \geq \sqrt{n + 1} \Leftrightarrow n \leq k^{2} - 1\) holds. Then
\begin{eqnarray}
(k \geq \sqrt{n + 1} \Leftrightarrow n \leq k^{2} - 1)&\Longrightarrow&(k \geq \sqrt{n + 1} \Longrightarrow n \leq k^{2} - 1)\\
                                                      &\Longrightarrow&r^{n} \leq r^{k^{2} - 1}\nonumber\\
                                                      &\Longrightarrow&r^{n + 1} \leq r^{k^{2}}\nonumber\\
                                                      &\Longrightarrow&r^{\log_{r}W(r, k)} < r^{n + 1} \leq r^{k^{2}}\\
                                                      &\Longrightarrow&W(r, k) < r^{n + 1} \leq r^{k^{2}},
\end{eqnarray}
where in Eqtns. (99)--(100), \(W(r, k) = r^{\log_{r}W(r, k)}\) and \(r^{n} \leq W(r, k) < r^{n + 1}\) was given. Therefore it suffices for \(k \geq \sqrt{n + 1} \Leftrightarrow n \leq k^{2} - 1\) to be true in order for \(W(r, k) < r^{n + 1} \leq r^{k^{2}}\) to be true.\\
\noindent \emph{Necessity}: 
\begin{eqnarray}
r^{n}&\leq W(r, k)   &< r^{n + 1} \leq r^{k^{2}}\\
     &\Longrightarrow&n \leq \log_{r}W(r, k) < n + 1 \leq k^{2}\nonumber\\
     &\Longrightarrow&n - 1 \leq \log_{r}W(r, k) - 1 < n \leq k^{2} - 1.
\end{eqnarray}
From this argument we derive \(W(r, k) < r^{n + 1} \leq r^{k^{2}} \Longrightarrow n \leq k^{2} - 1\). But since \(n \leq k^{2} - 1\) we can add one to both sides of this inequality then take square roots of both sides to derive \(\sqrt{n + 1} \leq k\), to show that \(n \leq k^{2} - 1 \Longrightarrow \sqrt{n + 1} \leq k\). Conversely we can square both sides of the inequality sign in \(\sqrt{n + 1} \leq k\) then subtract one from both sides to get \(n \leq k^{2} - 1\), to show that \(\sqrt{n + 1} \leq k \Longrightarrow n \leq k^{2} - 1\). Thus we have shown that
\begin{equation}
W(r, k) < r^{n + 1} \leq r^{k^{2}} \Longrightarrow (k \geq \sqrt{n + 1} \Leftrightarrow n \leq k^{2} - 1),
\end{equation}
which means \(W(r, k) < r^{n + 1} \leq r^{k^{2}}\) is true only if \((k \geq \sqrt{n + 1} \Leftrightarrow n \leq k^{2} - 1)\) is true.
\end{proof}
\pagebreak
\appendix
\section{The Possible Intervals $[2^{n}, 2^{n + 1})$ that contain $W(2, 7)$, upon Application of Theorem A.1}
Theorem A.1 does apply to the known van der Waerden numbers $9$, $35$, $178$, $1132$, $27$, $293$ and $76$, as the reader can verify with Table 2. So using \(W(r, k) \in [r^{n}, r^{n + 1})\) and provided Theorem A.1 applies such that for some exponent \(n \in [11, 48]\), one of the following subsets contains $W(2, 7)$,
\begin{eqnarray}
&[2^{11}, 2^{12}), [2^{12}, 2^{13}), [2^{13}, 2^{14}),&[2^{14}, 2^{15}), [2^{15}, 2^{16}), [2^{16}, 2^{17}),\\ 
&[2^{17}, 2^{18}), [2^{18}, 2^{19}), [2^{19}, 2^{20}),&[2^{20}, 2^{21}), [2^{21}, 2^{22}), [2^{22}, 2^{23}),\nonumber\\
&[2^{23}, 2^{24}), [2^{24}, 2^{25}), [2^{25}, 2^{26}),&[2^{26}, 2^{27}), [2^{27}, 2^{28}), [2^{28}, 2^{29}),\\ 
&[2^{29}, 2^{30}), [2^{30}, 2^{31}), [2^{31}, 2^{32}),&[2^{32}, 2^{33}), [2^{33}, 2^{34}), [2^{34}, 2^{35}),\nonumber\\
&[2^{35}, 2^{36}), [2^{36}, 2^{37}), [2^{37}, 2^{38}),&[2^{38}, 2^{39}), [2^{39}, 2^{40}), [2^{40}, 2^{41}),\\ 
&[2^{41}, 2^{42}), [2^{42}, 2^{43}), [2^{43}, 2^{44}),&[2^{44}, 2^{45}), [2^{45}, 2^{46}), [2^{46}, 2^{47}),\nonumber\\
&[2^{47}, 2^{48}), [2^{48}, 2^{49}).                  &
\end{eqnarray}

\pagebreak

\end{document}